\documentclass[12pt]{article}
\usepackage[margin=1in]{geometry}
\DeclareMathAlphabet\mathbfcal{OMS}{cmsy}{b}{n}
\usepackage[utf8]{inputenc}
\usepackage[dvipsnames]{xcolor}
\usepackage{enumerate}
\usepackage[utf8]{inputenc}
\usepackage{amsfonts}
\usepackage{amsthm}
\usepackage{amsmath}
\usepackage{lscape}
\usepackage{mathrsfs}
\usepackage{graphicx}
\usepackage{tikz-cd}
\usepackage{calc}
\usepackage{float}
\usepackage{stmaryrd}
\usepackage{multirow}
\usepackage{lipsum}
\usepackage{amssymb}
\usepackage{mathrsfs}
\usepackage{mathtools}
\usepackage{mathdots}
\usepackage{fancyhdr}
\usepackage{tikz}
\usepackage{faktor}
\usepackage{setspace}
\usetikzlibrary{decorations.markings}

\theoremstyle{plain}
\usepackage{sectsty}
\usepackage{kpfonts}

\newtheorem*{proposition*}{Proposition}

\newtheorem*{observation*}{Observation}
\newtheorem*{theorem*}{Theorem}
\newtheorem*{claim*}{Claim}  
\usepackage{blindtext}
\usepackage{fancyhdr}
\title{An extended Kodaira-Spencer functional}
\author{Gabriella Clemente}
\date{}
\allowdisplaybreaks
\makeatletter
\renewcommand\tableofcontents{%
    \@starttoc{toc}%
}
\begin{document}
\maketitle

\begin{abstract}
This note is about an extension of the Kodaira-Spencer functional to Calabi-Yau manifolds of any dimension.
\end{abstract} 

Let $X$ be an $n$-dimensional Calabi-Yau (CY) manifold with complex structure $J,$ and $\Omega$ be a nowhere vanishing holomorphic volume form. Put $\mathcal{A}^{0,q}_{X,J}:=C^{\infty} \big(X,\bigwedge^{0,q}_{{T_X}^*,J}\big),$ and $\mathcal{B}^{p,0}_{X,J}:=C^{\infty} \big(X,\bigwedge^{p,0}_{T_X ,J}\big).$ The tensor product of the graded commutative algebras $\mathcal{A}^{0,\cdot}_{X,J}=\bigoplus_{q=0}^n \mathcal{A}^{0,q}_{X,J}$ and $\mathcal{B}^{\cdot,0}_{X,J}=\bigoplus_{p=0}^n \mathcal{B}^{p,0}_{X,J}$ is the algebra $\mathcal{A}^{0,\cdot}_{X,J} \otimes \mathcal{B}^{\cdot,0}_{X,J}=\bigoplus_{k=0}^{2n} \Big(\bigoplus_{q+p=k} \mathcal{A}^{0,q}_{X,J} \otimes \mathcal{B}^{p,0}_{X,J}\Big)$ of polyvector field valued forms. There is an isomorphism $\vdash \Omega: \mathcal{A}^{0,\cdot}_{X,J} \otimes \mathcal{B}^{\cdot,0}_{X,J} \to \mathcal{A}^{\cdot,\cdot}_{X,J}$ such that for any $p, q,$ \[\vdash \Omega: \mathcal{A}^{0,q}_{X,J} \otimes \mathcal{B}^{p,0}_{X,J} \to \mathcal{A}^{n-p,q}_{X,J},\] where \[\zeta_1 \wedge \dots \wedge\zeta_q \otimes \eta_1 \wedge \dots \wedge\eta_p \vdash \Omega=\iota_{\eta_p} \iota_{\eta_{p-1}} \dots \iota_{\eta_1}\Omega \otimes \zeta_1 \wedge \dots \wedge\zeta_q.\] Let $\partial$ and $\overline{\partial}$ be the Dolbeault operators on $X.$ Define a differential on the shift \[\Big(\mathcal{A}^{0,\cdot}_{X,J} \otimes \mathcal{B}^{\cdot,0}_{X,J} \Big)[1]=\bigoplus_{k=-1}^{2n-1} \Big(\bigoplus_{q+p-1=k} \mathcal{A}^{0,q}_{X,J} \otimes \mathcal{B}^{p,0}_{X,J}\Big)\] by $\overline{\partial}_J (\alpha \otimes \beta)=\overline{\partial} \alpha \otimes \beta,$ and a bracket by $[\alpha \otimes \beta, \alpha' \otimes \beta']=\alpha \wedge \alpha' \otimes [\beta,\beta']^{SN},$ where $[\cdot,\cdot]^{SN}$ is the Schouten-Nijenhuis bracket. The degree of $x \in \mathcal{A}^{0,q}_{X,J} \otimes \mathcal{B}^{p,0}_{X,J}$ is $deg(x)=q+p-1,$ and $(\alpha \otimes \beta) \wedge (\alpha' \otimes \beta')=(-1)^{(deg(\alpha \otimes \beta)+1)(deg(\alpha' \otimes \beta')+1)} (\alpha' \otimes \beta') \wedge (\alpha \otimes \beta).$ The above data of shifted algebra, differential, and bracket defines a DGLA, which will be denoted throughout by $\mathfrak{t},$ and its homogeneous degree $k$ piece by $\mathfrak{t}^k.$ Any form $\gamma \in \mathfrak{t}$ decomposes as a sum $\gamma=\sum_c \gamma_c,$ $\gamma_c \in \mathfrak{t}^c,$ and further $\gamma_c=\sum_{q+p-1=c} \gamma_{p,q},$ where $\gamma_{p,q} \in \mathcal{A}^{0,q}_{X,J} \otimes \mathcal{B}^{p,0}_{X,J}.$ 

On $\mathfrak{t},$ one also has a degree $-1$ differential $\Delta_J:\mathcal{A}^{0,q}_{X,J} \otimes \mathcal{B}^{p,0}_{X,J} \to \mathcal{A}^{0,q}_{X,J} \otimes \mathcal{B}^{p-1,0}_{X,J},$ defined by $\Delta_J \gamma \vdash \Omega=\partial (\gamma \vdash \Omega)$ and satisfying the Tian-Todorov Lemma \[\Delta_J (\gamma_1 \wedge \gamma_2)=(-1)^{deg(\gamma_1)+1} [\gamma_1,\gamma_2]+\Delta_J \gamma_1 \wedge \gamma_2 +(-1)^{deg(\gamma_1)+1} \gamma_1 \wedge \Delta_J \gamma_2.\] Immediately one gets that if $\gamma_1, \gamma_2 \in \ker{\Delta_J},$ $\Delta_J (\gamma_1 \wedge \gamma_2)=(-1)^{deg(\gamma_1)+1} [\gamma_1,\gamma_2].$ 

The holomorphicity of $\Omega$ implies that $\overline{\partial}_J \gamma \vdash \Omega=\overline{\partial} (\gamma \vdash \Omega).$ The isomorphism given by contraction with $\Omega$ will often be denoted by $f,$ and in this notation, the previous remark says that $\overline{\partial}_J=f^{-1} \circ \overline{\partial} \circ f,$ and it is also true that $\Delta_J=f^{-1} \circ \partial \circ f.$ So the operators $\Delta_J$ and $\overline{\partial}_J$ anti-commute because $\partial \circ \overline{\partial}=-\overline{\partial} \circ \partial.$ The inverse of $\Delta_J,$ wherever it is exists, anti-commutes with $\overline{\partial}_J$ as well. 

Next, define a functional on $\mathfrak{t}$ by integration on $X$: \[\gamma \mapsto \int \gamma=\int_X (\gamma \vdash \Omega) \wedge \Omega.\] Certainly, $\int \gamma =\int \gamma_{n, n}$ for dimensional reasons. This integration defines a pairing \[\langle \alpha, \beta\rangle=\int \alpha \wedge \beta\] with respect to which $\overline{\partial}_J$ is graded skew self-adjoint and $\Delta_J$ is graded self-adjoint, meaning that for any homogeneous $\alpha,$ $\langle \overline{\partial}_J \alpha, \beta \rangle=(-1)^{deg(\alpha)+1} \langle \alpha, \overline{\partial}_J \beta \rangle,$ and $\langle \Delta_J \alpha, \beta \rangle=(-1)^{deg(\alpha)}\langle \alpha, \Delta_J \beta \rangle.$ As a result, the inverse of $\Delta_J,$ whenever it is defined, is graded self-adjoint; informally, $[(\Delta_J \mid_{\mathfrak{t}^k})^{-1}]^*=(-1)^k (\Delta_J \mid_{\mathfrak{t}^k})^{-1}$ so that if $\alpha \in dom (\Delta_J \mid_{\mathfrak{t}^k})^{-1} \subset \mathfrak{t}^{k-1},$ $\langle \Delta^{-1}_J \alpha, \beta\rangle=(-1)^k \langle \alpha, \Delta^{-1}_J \beta \rangle.$

\begin{proposition*}
Consider the functional $\Phi:\ker{\Delta_J} \subset \mathfrak{t} \to \mathbb{C}$ with definition \[\Phi (\gamma)=\int-\frac{1}{2} \overline{\partial}_J \Delta^{-1}_J \gamma \wedge \gamma + \frac{1}{6} \gamma \wedge \gamma \wedge \gamma.\] 

\begin{enumerate}
\item The first variation is 
\begin{equation*}
\begin{split}
\frac{d}{dt}\Big|_{t=0} \Phi(\gamma +t \beta)&=\int \beta \wedge \Big[\frac{1}{2} \Delta^{-1}_J \overline{\partial}_J \gamma -\frac{1}{2} \Delta^{-1}_J \overline{\partial}_J \big(\sum_a (-1)^a \gamma_a \big) +\\
&\frac{1}{6} \Big( \gamma \wedge \gamma +\gamma \wedge \big( \sum_c (-1)^{c+1} \gamma_c \big) + \big(\sum_{c,d} (-1)^{c+d} \gamma_c \wedge \gamma_d \big)\Big)\Big].
\end{split}
\end{equation*}

\item The Euler-Lagrange system is \[\big(\frac{1-(-1)^{p+q-1}}{2}\big) \overline{\partial}_J \gamma_{n-p-1,n-q-1}+\big(\frac{2(-1)^{p+q} +1}{6}\big) \sum_v [\gamma_{n-p-v,n-q-v}, \gamma_{v,v}]=0\] for all $0\leq p,q \leq n.$
\end{enumerate}
\end{proposition*}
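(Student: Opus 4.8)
The plan for part (1) is to expand $\Phi(\gamma+t\beta)$, differentiate at $t=0$, and then rewrite every term in the shape $\int\beta\wedge(\cdots)$ so that the gradient can be read off. Differentiating $\tfrac16\int(\gamma+t\beta)^{\wedge 3}$ gives $\tfrac16\int(\beta\wedge\gamma\wedge\gamma+\gamma\wedge\beta\wedge\gamma+\gamma\wedge\gamma\wedge\beta)$, and differentiating the quadratic term gives $-\tfrac12\int(\overline{\partial}_J\Delta^{-1}_J\beta\wedge\gamma+\overline{\partial}_J\Delta^{-1}_J\gamma\wedge\beta)$. The one tool needed to cycle the $\beta$'s to the front is the identity $\int x\wedge y=\int y\wedge\big(\sum_c(-1)^{c+1}x_c\big)$, valid for any $x,y\in\mathfrak{t}$: the summand $\int x_c\wedge y_d$ vanishes unless the two pieces have complementary bidegree — a dimension count on $X$, after applying $\vdash\Omega$ and then $\wedge\,\Omega$, forces the $\mathcal{A}$-degrees and the $\mathcal{B}$-degrees each to sum to $n$, i.e.\ $\deg x_c+\deg y_d=2n-2$ — and on precisely those pieces the graded-commutativity sign $(-1)^{(c+1)(d+1)}$ collapses to $(-1)^{c+1}$. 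Applying this together with associativity of $\wedge$ to the three cubic terms produces the factors $\gamma\wedge\gamma$, $\gamma\wedge\big(\sum_c(-1)^{c+1}\gamma_c\big)$, and $\sum_{c,d}(-1)^{c+d}\gamma_c\wedge\gamma_d$. For the two quadratic terms one instead transfers operators off $\beta$ using the stated adjunctions — $\overline{\partial}_J$ graded skew self-adjoint, $\Delta_J$ and hence $\Delta^{-1}_J$ graded self-adjoint for $\langle\alpha,\beta\rangle=\int\alpha\wedge\beta$ — together with $\overline{\partial}_J\Delta^{-1}_J=-\Delta^{-1}_J\overline{\partial}_J$: in the first term the two operators pass directly onto $\gamma$ and the sign from skew-self-adjointness cancels the one from self-adjointness of $\Delta^{-1}_J$, leaving $+\tfrac12\,\Delta^{-1}_J\overline{\partial}_J\gamma$, while in the second term the transfer must be carried out degree by degree in $a$, which is exactly what produces the weight $\sum_a(-1)^a\gamma_a$ and the sign $-\tfrac12$. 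Collecting the six contributions gives the displayed first variation; call the bracketed expression $E(\gamma)$.

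For part (2), note that $\ker\Delta_J$ is a linear subspace, so criticality at $\gamma$ means $\int\beta\wedge E(\gamma)=0$ for every $\beta\in\ker\Delta_J$. Since $\Delta_J$ lowers the holomorphic polyvector degree and fixes the antiholomorphic one, $\ker\Delta_J$ splits as a direct sum over bidegrees, so we may test against $\beta$ concentrated in a single bidegree $(p,q)$; by nondegeneracy of the pairing between $\mathcal{A}^{0,q}_{X,J}\otimes\mathcal{B}^{p,0}_{X,J}$ and $\mathcal{A}^{0,n-q}_{X,J}\otimes\mathcal{B}^{n-p,0}_{X,J}$, this says the component $E_{n-p,n-q}(\gamma)$ annihilates the kernel of $\Delta_J$ in bidegree $(p,q)$. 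Graded self-adjointness identifies the orthogonal complement of that kernel with the image of $\Delta_J$ in the relevant bidegree, and since $\Delta_J^2=0$ (immediate from $\Delta_J=f^{-1}\circ\partial\circ f$) this image lies inside $\ker\Delta_J$; hence criticality forces $\Delta_J E_{n-p,n-q}(\gamma)=0$ for all $p,q$. It then remains to compute $\Delta_J E(\gamma)$ componentwise. On the linear part, $\Delta_J\Delta^{-1}_J=\mathrm{id}$ removes the inverse and leaves $\tfrac12\sum_a(1-(-1)^a)\overline{\partial}_J\gamma_a$; writing $\gamma_a=\sum_{q'+p'-1=a}\gamma_{p',q'}$ and matching indices, its bidegree-$(n-p,n-q)$ piece is $\big(\tfrac{1-(-1)^{p+q-1}}{2}\big)\overline{\partial}_J\gamma_{n-p-1,n-q-1}$. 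On the cubic part, every $\gamma_{p',q'}$ lies in $\ker\Delta_J$, so the Tian--Todorov lemma sends each $\Delta_J(\gamma_{p_1,q_1}\wedge\gamma_{p_2,q_2})$ to $(-1)^{p_1+q_1}[\gamma_{p_1,q_1},\gamma_{p_2,q_2}]$; summing the three cubic summands yields a double sum over $p_1+p_2=n-p$, $q_1+q_2=n-q$ carrying an explicit $(p_1,q_1)$-dependent sign, and the graded antisymmetry $[x,y]=-(-1)^{\deg x\,\deg y}[y,x]$ together with the symmetry of that sign under swapping the two slots lets one reorganize it into the uniform coefficient $\tfrac{2(-1)^{p+q}+1}{6}$ times $\sum_v[\gamma_{n-p-v,n-q-v},\gamma_{v,v}]$. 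Setting each bidegree component of $\Delta_J E(\gamma)$ to zero is the Euler--Lagrange system.

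\textbf{Main obstacle.} Almost everything is sign bookkeeping, concentrated in two spots. First, in part (1), one must verify carefully that the apparently quadratic exponent $(c+1)(d+1)$ genuinely reduces to $c+1$ under the integral — this is exactly where the complementary-bidegree constraint enters — and then thread the resulting weights through the associativity rearrangements of the triple product without dropping a sign. Second, in part (2), the collapse of the double bracket sum: one checks parity case by parity case that, after the graded antisymmetry of $[\cdot,\cdot]$ is applied, the $(p_1,q_1)$-dependent coefficients recombine into $\tfrac{2(-1)^{p+q}+1}{6}$, and keeps track of the off-diagonal cross terms, which vanish precisely when $\gamma$ runs over the diagonal $\bigoplus_k\mathcal{A}^{0,k}_{X,J}\otimes\mathcal{B}^{k,0}_{X,J}$ — the natural home of a Kodaira--Spencer class. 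A softer point is justifying that the orthogonal complement of $\ker\Delta_J$ equals the image of $\Delta_J$ and that $\Delta_J\Delta^{-1}_J=\mathrm{id}$ wherever it is used; one treats this either formally, in the spirit of the preamble's ``wherever it exists,'' or by restricting to the $\Delta_J$-exact part so the partial inverse is defined.
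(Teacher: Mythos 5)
Your proposal is correct in substance and follows essentially the same route as the paper: in part (1) you expand, differentiate, move $\beta$ to the front via graded commutativity with the complementary-degree collapse of $(-1)^{(c+1)(d+1)}$ to $(-1)^{c+1}$, and transfer $\overline{\partial}_J\Delta^{-1}_J$ onto $\gamma$ by the stated adjunctions, exactly reproducing the paper's four displayed identities (one small caution: the two adjunction signs multiply to an overall $-1$ rather than cancelling; it is this $-1$ together with the $-\tfrac12$ prefactor that gives your $+\tfrac12\,\Delta^{-1}_J\overline{\partial}_J\gamma$). In part (2) the final manipulation is the same (apply $\Delta_J$, use $\Delta_J\Delta^{-1}_J=\mathrm{id}$ and Tian--Todorov, the sign $(-1)^{\deg+1}$ converting $\tfrac{2+(-1)^{p+q}}{6}$ into $\tfrac{2(-1)^{p+q}+1}{6}$), but your variational justification is genuinely different and more careful: the paper simply sets the $(n,n)$-coefficient of $\beta_{p,q}$ to zero and invokes Tian--Todorov, whereas you observe that since $\beta$ is constrained to $\ker\Delta_J$, criticality only forces each gradient component to be orthogonal to $\ker\Delta_J$, hence (granting the Hodge-theoretic identification of that orthogonal complement with the image of $\Delta_J$) to be annihilated by $\Delta_J$ --- which explains why the system appears with $\Delta_J$ applied. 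The one place your write-up is thinner than the paper's is the meaning of $\Delta^{-1}_J$ on $\ker\Delta_J$: the paper spends the first half of its proof decomposing $\gamma_{p,q}=\Delta_J\alpha_{p+1,q}+C_{pq}$ by the Hodge decomposition and checking that the harmonic pieces are $\overline{\partial}_J$-closed, so they drop out of $\int\overline{\partial}_J\Delta^{-1}_J\gamma\wedge\gamma$; you handle this only ``formally,'' and an honest version of your argument should include that step or restrict at the outset to the $\Delta_J$-exact part. Finally, your caveat about off-diagonal cross terms in the cubic part is apt: the full $(n-p,n-q)$-component of $\gamma\wedge\gamma$ contains products $\gamma_{p_1,q_1}\wedge\gamma_{p_2,q_2}$ with $p_2\neq q_2$, and the paper's passage to $\sum_v\gamma_{n-p-v,n-q-v}\wedge\gamma_{v,v}$ silently discards them, so your remark flags a gap in the paper's own statement rather than introducing one into your argument.
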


\begin{proof}
The first term of the functional has the following interpretation thanks to the Hodge decomposition \[\mathcal{A}^{n-p,q}_{X,J}=\partial \big(\mathcal{A}^{n-p-1,q}_{X,J}\big) \oplus \mathcal{H}^{n-p,q}_{\square} (X) \oplus \partial^* \big(\mathcal{A}^{n-p+1,q}_{X,J}\big),\] from which it follows that $\ker{\partial} \cap \mathcal{A}^{n-p,q}_{X,J} = \partial \big(\mathcal{A}^{n-p-1,q}_{X,J}\big) \oplus \mathcal{H}^{n-p,q}_{\square} (X)$ because $\ker{\partial} \cap \partial^* \big(\mathcal{A}^{n-p+1,q}_{X,J}\big)=\{0\}.$ So \[\ker{\Delta_J}=\Delta_J \big(\mathcal{A}^{0,q}_{X,J} \otimes \mathcal{B}^{p+1,0}_{X,J}\big) \oplus f^{-1} \big(\mathcal{H}^{n-p,q}_{\square} (X)\big).\] Let $\gamma \in \ker{\Delta_J},$ which means that $\Delta_J \gamma_{p,q}=0$ for all $0 \leq p,q \leq n.$ Then, $\gamma_{p,q}=\Delta_J \alpha_{p+1,q} +C_{pq}$ for $C_{pq} \in f^{-1} \big(\mathcal{H}^{n-p,q}_{\square} (X)\big),$ and likewise $\gamma_{n-p-1, n-q-1}=\Delta_J \beta_{n-p, n-q-1}+D_{pq}.$ And then,

\begin{equation*}
\begin{split}
\int \overline{\partial}_J \Delta^{-1}_J \gamma \wedge \gamma &=\sum_{p,q} \int \overline{\partial}_J \Delta^{-1}_J \gamma_{p,q} \wedge \gamma_{n-p-1, n-q-1}\\
&=\sum_{p,q} \Big[\int \overline{\partial}_J \Delta^{-1}_J (\Delta_J \alpha_{p+1,q}) \wedge \Delta_J \beta_{n-p,n-q-1}+\int \overline{\partial}_J \Delta^{-1}_J (\Delta_J \alpha_{p+1,q}) \wedge D_{pq}+\\
& \int \overline{\partial}_J \Delta^{-1}_J C_{pq} \wedge \Delta_J \beta_{n-p, n-q-1} +  \int \overline{\partial}_J \Delta^{-1}_J C_{pq} \wedge D_{pq}\Big]\\
&=\sum_{p,q} \Big[\int \overline{\partial}_J \alpha_{p+1,q} \wedge \Delta_J \beta_{n-p,n-q-1}+\int \overline{\partial}_J \alpha_{p+1,q} \wedge D_{pq}+\\
& \int \overline{\partial}_J \Delta^{-1}_J C_{pq} \wedge \Delta_J \beta_{n-p, n-q-1} +  \int \overline{\partial}_J \Delta^{-1}_J C_{pq} \wedge D_{pq}\Big]\\
&=\sum_{p,q} \Big[\int \overline{\partial}_J \alpha_{p+1,q} \wedge \Delta_J \beta_{n-p,n-q-1}+ (-1)^{p+q+1} \int \alpha_{p+1,q} \wedge \overline{\partial}_J D_{pq}\\
&-\int \Delta^{-1}_J \overline{\partial}_J C_{pq} \wedge \Delta_J \beta_{n-p, n-q-1} + (-1)^{p+q+1} \int  \Delta^{-1}_J C_{pq} \wedge \overline{\partial}_J D_{pq}.
\end{split}
\end{equation*} 

But since $D_{pq}$ is the preimage by the isomorphism $f$ of a harmonic form $h_D,$ $\overline{\partial}_J D_{pq}=f^{-1}(\overline{\partial} h_D)=0,$ and of course also $\overline{\partial}_J C_{pq}=0$ for the same reasons. Therefore, \[\int \overline{\partial}_J \Delta^{-1}_J \gamma \wedge \gamma=\sum_{p,q} \int \overline{\partial}_J \alpha_{p+1,q} \wedge \Delta_J \beta_{n-p,n-q-1}.\]

$1.$ The first derivative at $t=0$ is \[\frac{d}{dt}\Big|_{t=0} \Phi(\gamma +t \beta)=\int -\frac{1}{2} \overline{\partial}_J \Delta^{-1}_J \beta \wedge \gamma - \frac{1}{2} \overline{\partial}_J \Delta^{-1}_J \gamma \wedge \beta +\frac{1}{6} \big(\beta \wedge \gamma \wedge \gamma + \gamma \wedge \beta \wedge \gamma +\gamma \wedge \gamma \wedge \beta \big).\] First, note that 

\begin{equation}\label{1eq}
\begin{split}
\int \overline{\partial}_J \Delta^{-1}_J \beta \wedge \gamma &=\int \overline{\partial}_J \Delta^{-1}_J \big( \sum_a \beta_a \big) \wedge \gamma \\
&=\sum_a (-1)^{2a+3} \int \beta_a \wedge \Delta^{-1}_J \overline{\partial}_J \gamma \\
&=-\int \beta \wedge \Delta^{-1}_J \overline{\partial}_J \gamma,
\end{split}
\end{equation}
and also that
\begin{equation*}
\begin{split}
\int \overline{\partial}_J \Delta^{-1}_J \gamma \wedge \beta &= \int \sum_{a, b} \overline{\partial}_J \Delta^{-1}_J \gamma_a \wedge \beta_b \\
&= \int \sum_{a, b} (-1)^{(a+1)(b+1)} \beta_b \wedge \overline{\partial}_J \Delta^{-1}_J \gamma_a.
\end{split}
\end{equation*}
In general, there is nothing wrong in writing $\int \gamma = \int \gamma_{2n-1},$ but the components $\gamma_{p, q}$ of $\gamma_{2n-1}$ do not contribute to the integral unless $p=q=n.$ It will be useful here to think of $\int \gamma,$ $\int \gamma_{2n-1},$ and $\int \gamma_{n, n}$ as interchangeable. Now, since \[deg(\beta_b \wedge \overline{\partial}_J \Delta^{-1}_J \gamma_a)=a+b+3,\]

\begin{equation}\label{2eq}
\begin{split}
\int \overline{\partial}_J \Delta^{-1}_J \gamma \wedge \beta &= \int \sum_{a, b} (-1)^{(a+1)(b+1)} \beta_b \wedge \overline{\partial}_J \Delta^{-1}_J \gamma_a \\
&=\int \sum_a (-1)^{a+1} \beta_{2(n-2)-a} \wedge \overline{\partial}_J \Delta^{-1}_J \gamma_a \\
&=\int \beta \wedge  \overline{\partial}_J \Delta^{-1}_J \big( \sum_a (-1)^{a+1} \gamma_a \big)\\
&=\int \beta \wedge \Delta^{-1}_J \overline{\partial}_J \big( \sum_a (-1)^a \gamma_a \big).
\end{split}
\end{equation}

Since $\gamma_c \wedge \gamma_d \wedge \gamma_e$ is of degree $c+d+e+2,$ 

\begin{equation}\label{3eq}
\begin{split}
\int \gamma \wedge \beta \wedge \gamma &=\int \big(\sum_c \gamma_c \big) \wedge \big(\sum_d \beta_d \big) \wedge \big( \sum_a \gamma_a\big)\\
&=\int \sum_{c,d,e} (-1)^{(c+1)(d+e+2)} \beta_d \wedge \gamma_e \wedge \gamma_c \\
&=\int \sum_{c,e} (-1)^{c+1} \beta_{2n-3-(c+e)} \wedge \gamma_e \wedge \gamma_c\\
&=\int \beta \wedge \gamma \wedge \big(\sum_c (-1)^{c+1} \gamma_c \big).
\end{split}
\end{equation}
A similar computation shows that 
\begin{equation}\label{4eq}
\begin{split}
\int \gamma \wedge \gamma \wedge \beta &=\int \beta \wedge \big(\sum_{c, d} (-1)^{c+d} \gamma_c \wedge \gamma_d \big).
\end{split}
\end{equation}

Therefore, (\ref{1eq}) -- (\ref{4eq}) suggest that the first variation of $\Phi$ with respect to $\gamma$ is the expression
\begin{equation*}
\begin{split}
\frac{d}{dt}\Big|_{t=0} \Phi(\gamma +t \beta)&=\int \beta \wedge \Big[\frac{1}{2} \Delta^{-1}_J \overline{\partial}_J \gamma -\frac{1}{2} \Delta^{-1}_J \overline{\partial}_J \big(\sum_a (-1)^a \gamma_a \big) +\\
&\frac{1}{6} \Big( \gamma \wedge \gamma +\gamma \wedge \big( \sum_c (-1)^{c+1} \gamma_c \big) + \big(\sum_{c,d} (-1)^{c+d} \gamma_c \wedge \gamma_d \big)\Big)\Big].
\end{split}
\end{equation*}

$2.$ The $(n, n)$-part of \[\beta \wedge \Big[\frac{1}{2} \Delta^{-1}_J \overline{\partial}_J \gamma -\frac{1}{2} \Delta^{-1}_J \overline{\partial}_J \big(\sum_a (-1)^a \gamma_a \big) +\frac{1}{6} \Big( \gamma \wedge \gamma +\gamma \wedge \big( \sum_c (-1)^{c+1} \gamma_c \big) + \big(\sum_{c,d} (-1)^{c+d} \gamma_c \wedge \gamma_d \big)\Big)\Big]\] is

\[\sum_{p,q} \beta_{p,q} \wedge \Big[\big(\frac{1-(-1)^{p+q-1}}{2}\big)\Delta^{-1}_J \overline{\partial}_J \gamma_{n-p-1,n-q-1}+\big(\frac{2+(-1)^{p+q}}{6}\big)\sum_v \gamma_{n-p-v,n-q-v} \wedge \gamma_{v,v}\Big]\] and the Euler-Lagrange system of $\Phi$ is found by setting it equal to zero. The Tian-Todorov Lemma implies that the system is of the form
\[\big(\frac{1-(-1)^{p+q-1}}{2}\big) \overline{\partial}_J \gamma_{n-p-1,n-q-1}+\big(\frac{2(-1)^{p+q} +1}{6}\big) \sum_v [\gamma_{n-p-v,n-q-v}, \gamma_{v,v}]=0,\] where $0 \leq p, q \leq n.$
\end{proof}

The critical points of $\Phi$ seem to correspond to generalized deformations of a complex structure in the extended moduli space $H^{*} (X,\Lambda^{*} T_X)[2].$ If $X$ is a CY $3$-fold, then $\Phi$ restricted to $\mathcal{A}^{0,1}_{X,J} \otimes \mathcal{B}^{1,0}_{X,J} \cap \ker{\Delta_J}$ is the Kodaira-Spencer functional from \cite{BCOV}, whose critical points are genuine Maurer-Cartan (MC) elements of $\mathfrak{t},$ describing first order deformations of a complex structure. 

It is possible to extend $\Phi$ to formal power series with values in $\mathfrak{t}.$ For a solution $\hat{\gamma}=\sum_a \hat{\gamma}_a t^a +\Delta_J \alpha (t)$ to the MC equation as specified in Lemma 6.1 \cite{BK}, this extension recovers the integral \[\mathbf{\Phi}=\int -\frac{1}{2} \overline{\partial}_J \alpha \wedge \Delta_J \alpha+\frac{1}{6} \hat{\gamma} \wedge \hat{\gamma} \wedge \hat{\gamma}.\]

\noindent
Gabriella Clemente

\noindent
e-mail: clemente6171@gmail.com
\end{document}